\providecommand{\U}[1]{\protect\rule{.1in}{.1in}}
\newtheorem{theorem}{Theorem}[section]
\newtheorem{proposition}[theorem]{Proposition}
\newtheorem{remark}[theorem]{Remark}
\newtheorem{lemma}[theorem]{Lemma}
\numberwithin{equation}{section}
\begin{document}
\title[On the polynomial Hardy--Littlewood inequality]{On the polynomial
Hardy--Littlewood inequality}
\author[Ara\'{u}jo]{G. Ara\'{u}jo}
\address{Departamento de Matem\'{a}tica \\
\indent Universidade Federal da Para\'{\i}ba \\
\indent 58.051-900 - Jo\~{a}o Pessoa, Brazil.}
\email{gdasaraujo@gmail.com}
\author[Jim\'{e}nez]{P. Jim\'{e}nez-Rodriguez}
\address{Departament of Mathematical Sciences\\
\indent Mathematics and computer Sciences Building\\
\indent Kent State University \\
\indent Kent, OH 44242, USA.}
\email{pjimene1@kent.edu}
\author[Mu\~{n}oz]{G. A. Mu\~{n}oz-Fern\'{a}ndez}
\address{Departamento de An\'{a}lisis Matem\'{a}tico\\
\indent Facultad de Ciencias Matem\'{a}ticas \\
\indent Plaza de Ciencias 3 \\
\indent Universidad Complutense de Madrid\\
\indent Madrid, 28040, Spain.}
\email{gustavo$\_$fernandez@mat.ucm.es}
\author[N\'{u}\~{n}ez]{D. N\'{u}\~{n}ez-Alarc\'{o}n}
\address{Departamento de Matem\'{a}tica\\
\indent  Universidade Federal de Pernambuco\\
\indent 50.740-560 - Recife, Brazil.}
\email{danielnunezal@gmail.com}
\author[Pellegrino]{D. Pellegrino}
\address{Departamento de Matem\'{a}tica \\
\indent Universidade Federal da Para\'{\i}ba \\
\indent 58.051-900 - Jo\~{a}o Pessoa, Brazil.}
\email{pellegrino@pq.cnpq.br and dmpellegrino@gmail.com}
\author[Seoane]{J.B. Seoane-Sep\'{u}lveda}
\address{Departamento de An\'{a}lisis Matem\'{a}tico\\
\indent Facultad de Ciencias Matem\'{a}ticas \\
\indent Plaza de Ciencias 3 \\
\indent Universidad Complutense de Madrid\\
\indent Madrid, 28040, Spain.}
\email{jseoane@mat.ucm.es}
\author[Serrano]{D. M. Serrano-Rodr\'iguez}
\address{Departamento de Matem\'{a}tica\\
\indent  Universidade Federal de Pernambuco\\
\indent 50.740-560 - Recife, Brazil.}
\email{dmserrano0@gmail.com}
\thanks{G. Ara\'{u}jo, D. Pellegrino and J.B. Seoane-Sep\'{u}lveda was
supported by CNPq Grant 401735/2013-3 (PVE - Linha 2). G.A. Mu\~{n}oz-Fern%
\'{a}ndez and J. B. Seoane-Sep\'{u}lveda were supported by MTM2012-34341.}
\subjclass{}
\keywords{Hardy--Littlewood inequality, Bohnenblust--Hille inequality;
absolutely summing operators}

\begin{abstract}
We investigate the growth of the constants of the polynomial
Hardy-Littlewood inequality.
\end{abstract}

\maketitle

\section{Introduction}

Given $\alpha =(\alpha _{1},\ldots ,\alpha _{n})\in {\mathbb{N}}^{n}$, let $%
|\alpha |:=\alpha _{1}+\cdots +\alpha _{n}$ and let $\mathbf{x}^{\alpha }$
stand for the monomial $x_{1}^{\alpha _{1}}\cdots x_{n}^{\alpha _{n}}$ for $%
\mathbf{x}=(x_{1},\ldots ,x_{n})\in {\mathbb{K}}^{n}$. The polynomial
Bohnenblust--Hille inequality asserts that if $P$ is a homogeneous
polynomial of degree $m$ on $\ell _{\infty }^{n}$ given by 
\begin{equation*}
P(x_{1},...,x_{n})=\sum_{|\alpha |=m}a_{\alpha }\mathbf{{x}^{\alpha },}
\end{equation*}%
then 
\begin{equation*}
\left( {\sum\limits_{\left\vert \alpha \right\vert =m}}\left\vert a_{\alpha
}\right\vert ^{\frac{2m}{m+1}}\right) ^{\frac{m+1}{2m}}\leq D_{m}^{\mathbb{K}%
}\left\Vert P\right\Vert
\end{equation*}%
for some positive constant $D_{m}^{\mathbb{K}}$ which does not depend on $n$
(the exponent $\frac{2m}{m+1}$ is optimal). Precise estimates of the growth
of the constants $D_{m}^{\mathbb{K}}$ are crucial for different
applications. The following diagram shows the evolution of the estimates of $%
D_{m}^{\mathbb{C}}:$

\bigskip

\begin{center}
\begin{tabular}{|c|c|c|}
\hline
\textbf{Authors} & \textbf{Year} & \textbf{Estimate} \\ \hline
Bohnenblust and Hille & 1931, \cite{bh} & $D_{m}^{\mathbb{C}}\leq m^{\frac{%
m+1}{2m}}\left( \sqrt{2}\right) ^{m-1}$ \\ \hline
$%
\begin{array}{c}
\text{Defant, Frerick, Ortega-Cerd\'{a},} \\ 
\text{Ouna{\"{\i}}es, and Seip}%
\end{array}
$ & 2011, \cite{ann} & $D_{m}^{\mathbb{C}}\leq\left( 1+\frac{1}{m-1}\right)
^{m-1}\sqrt{m}\left( \sqrt{2}\right) ^{m-1} $ \\ \hline
Bayart, Pellegrino, and Seoane-Sep\'{u}lveda & 2013, \cite{bps} & $D_{m}^{%
\mathbb{C}}\leq C(\varepsilon)\left( 1+\varepsilon\right) ^{m}$, \\ \hline
\end{tabular}
\end{center}


\bigskip

\noindent where, in the table above, $C(\varepsilon)\left( 1+\varepsilon
\right) ^{m}$ means that given $\varepsilon>0$, there is a constant $C\left(
\varepsilon\right) >0$ such that $D_{m}^{\mathbb{C}}\leq C(\varepsilon
)\left( 1+\varepsilon\right) ^{m} $ for all $m.$

For real scalars it is shown in \cite{campos}\ that%
\begin{equation*}
\left( 1.1\right) ^{m}\leq D_{m}^{\mathbb{R}}\leq C(\varepsilon )\left(
2+\varepsilon \right) ^{m},
\end{equation*}%
and this means that for real scalars the inequality is hypercontractive and
this result is optimal.

When replacing $\ell _{\infty }^{n}$ by $\ell _{p}^{n}$ the extension of the
Bohnenblust--Hille inequality is called Hardy--Littlewood inequality and the
optimal exponents are $\frac{2mp}{mp+p-2m}$ for $p\geq 2m$. This is a
consequence of the multilinear Hardy--Littlewood inequality (see \cite%
{abps.hlw, dimant}). \ More precisely, if $P$ is a homogeneous polynomial of
degree $m$ on $\ell _{p}^{n},$ with $p\geq 2m,$ given by 
\begin{equation*}
P(x_{1},\ldots ,x_{n})=\sum_{|\alpha |=m}a_{\alpha }\mathbf{{x}^{\alpha },}
\end{equation*}%
then 
\begin{equation*}
\left( {\sum\limits_{\left\vert \alpha \right\vert =m}}\left\vert a_{\alpha
}\right\vert ^{\frac{2mp}{mp+p-2m}}\right) ^{\frac{mp+p-2m}{2mp}}\leq
D_{m,p}^{\mathbb{K}}\left\Vert P\right\Vert ,
\end{equation*}%
and $D_{m,p}^{\mathbb{K}}$ does not depend on $n$.

In this paper we look for upper and lower estimates for $D_{m,p}^{\mathbb{K}%
}.$

\section{First (and probably bad) upper estimates for $D_{m,p}^{\mathbb{K}}$}

Given $\alpha =(\alpha _{1},\ldots ,\alpha _{n})\in {\mathbb{N}}^{n}$, let
us define 
\begin{equation*}
\binom{m}{\alpha }:=\frac{m!}{\alpha _{1}!\cdot \ldots \cdot \alpha _{n}!},
\end{equation*}%
for $|\alpha |=m\in {\mathbb{N}}^{\ast }$. A straightforward consequence of
the multinomial formula yields the following relationship between the
coefficients of a homogeneous polynomial and the polar of the polynomial
(this lemma appears in \cite{diaz} and is essentially \emph{folklore}).

\begin{lemma}
\label{lem:Multinomial} If $P$ is a homogeneous polynomial of degree $m$ on $%
{\mathbb{K}}^{n}$ given by 
\begin{equation*}
P(x_{1},\ldots,x_{n})=\sum_{|\alpha|=m}a_{\alpha}\mathbf{{x}^{\alpha},}
\end{equation*}
and $L$ is the polar of $P$, then 
\begin{equation*}
L(e_{1}^{\alpha_{1}},\ldots,e_{n}^{\alpha_{n}})=\frac{a_{\alpha}}{\binom {m}{%
\alpha}},
\end{equation*}
where $\{e_{1},\ldots,e_{n}\}$ is the canonical basis of ${\mathbb{K}}^{n}$
and $e_{k}^{\alpha_{k}}$ stands for $e_{k}$ repeated $\alpha_{k}$ times.
\end{lemma}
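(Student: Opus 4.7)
The plan is to recover the coefficient $a_\alpha$ by expanding the symmetric $m$-linear form $L$ evaluated on the diagonal and matching monomials. Recall that, by definition, $L$ is the unique symmetric $m$-linear form on ${\mathbb K}^n$ satisfying $L(x,\ldots,x)=P(x)$ for every $x\in {\mathbb K}^n$.

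First I would write any vector $x=(x_1,\ldots,x_n)$ as $x=\sum_{k=1}^n x_k e_k$ and use the $m$-linearity of $L$ to expand
\begin{equation*}
P(x)=L(x,\ldots,x)=\sum_{i_1,\ldots,i_m=1}^{n} x_{i_1}\cdots x_{i_m}\, L(e_{i_1},\ldots,e_{i_m}).
\end{equation*}
Next, I would group the $n^m$ tuples $(i_1,\ldots,i_m)$ according to the multi-index $\alpha=(\alpha_1,\ldots,\alpha_n)$ counting how many times each $k\in\{1,\ldots,n\}$ appears among $i_1,\ldots,i_m$. By the symmetry of $L$, all tuples in a given group produce the same value $L(e_1^{\alpha_1},\ldots,e_n^{\alpha_n})$ and the same monomial $x^\alpha$; the number of tuples in the group is exactly the multinomial coefficient $\binom{m}{\alpha}=m!/(\alpha_1!\cdots\alpha_n!)$.

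This gives the identity
\begin{equation*}
P(x)=\sum_{|\alpha|=m}\binom{m}{\alpha}\,L(e_1^{\alpha_1},\ldots,e_n^{\alpha_n})\,x^{\alpha}.
\end{equation*}
Finally, comparing with the given expansion $P(x)=\sum_{|\alpha|=m} a_\alpha x^\alpha$ and invoking the linear independence of distinct monomials on ${\mathbb K}^n$ yields $a_\alpha=\binom{m}{\alpha} L(e_1^{\alpha_1},\ldots,e_n^{\alpha_n})$, from which the stated formula follows by dividing by the (nonzero) multinomial coefficient.

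The only subtle point, and hence the main obstacle, is the symmetry argument used to collapse each orbit of tuples into a single term: one needs to check that $L(e_{i_1},\ldots,e_{i_m})=L(e_1^{\alpha_1},\ldots,e_n^{\alpha_n})$ whenever $(i_1,\ldots,i_m)$ is a permutation of $(1^{\alpha_1},\ldots,n^{\alpha_n})$, which is immediate from the defining symmetry of the polar form. The combinatorial count of such permutations as $\binom{m}{\alpha}$ is the classical multinomial identity, so no further computation is required.
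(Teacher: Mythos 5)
Your proof is correct, and it is precisely the ``straightforward consequence of the multinomial formula'' that the paper alludes to when it cites this lemma as folklore (the paper itself gives no proof). Expanding $L$ on the diagonal by multilinearity, collapsing each orbit of index tuples via the symmetry of $L$, counting the orbit size as $\binom{m}{\alpha}$, and comparing coefficients of the (linearly independent) monomials is exactly the intended argument.
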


\bigskip From now on, for any map $f:\mathbb{R}\rightarrow \mathbb{R}$ we
define 
\begin{equation*}
f\left( \infty \right) :=\lim_{p\rightarrow \infty }f(p).
\end{equation*}

The following result is also essentially known. We present here the details
of its proof for the sake of completeness of the paper.

\begin{proposition}
\label{pro:first_approach} If $P$ is a homogeneous polynomial of degree $m$
on $\ell _{p}^{n},$ with $p\geq 2m,$ given by 
\begin{equation*}
P(x_{1},\ldots ,x_{n})=\sum_{|\alpha |=m}a_{\alpha }\mathbf{{x}^{\alpha },}
\end{equation*}%
then 
\begin{equation*}
\left( {\sum\limits_{\left\vert \alpha \right\vert =m}}\left\vert a_{\alpha
}\right\vert ^{\frac{2mp}{mp+p-2m}}\right) ^{\frac{mp+p-2m}{2mp}}\leq
D_{m,p}^{\mathbb{K}}\left\Vert P\right\Vert 
\end{equation*}%
with 
\begin{equation*}
D_{m,p}^{\mathbb{K}}=C_{m,p}^{\mathbb{K}}\frac{m^{m}}{\left( m!\right) ^{%
\frac{mp+p-2m}{2mp}}},
\end{equation*}%
where $C_{m,p}^{\mathbb{K}}$ are the constants of the multilinear
Hardy-Littlewood inequality.
\end{proposition}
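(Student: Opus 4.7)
The plan is to reduce the polynomial inequality to its multilinear counterpart via polarization. Let $L$ denote the (symmetric) $m$-linear polar form of $P$, and set $s := \frac{2mp}{mp+p-2m}$, so that $1/s = \frac{mp+p-2m}{2mp}$. First I would apply the multilinear Hardy--Littlewood inequality to $L$, which yields
\begin{equation*}
\left(\sum_{i_{1},\ldots,i_{m}=1}^{n}\bigl|L(e_{i_{1}},\ldots,e_{i_{m}})\bigr|^{s}\right)^{1/s}\leq C_{m,p}^{\mathbb{K}}\,\|L\|.
\end{equation*}

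Next I would rewrite the left-hand side in terms of the coefficients $a_{\alpha}$. Grouping the $m$-tuples $(i_{1},\ldots,i_{m})$ according to their type $\alpha=(\alpha_{1},\ldots,\alpha_{n})$ with $|\alpha|=m$, there are exactly $\binom{m}{\alpha}$ tuples of each type, and by the symmetry of $L$ together with Lemma \ref{lem:Multinomial} each contributes a value of modulus $|a_{\alpha}|/\binom{m}{\alpha}$. Thus
\begin{equation*}
\sum_{i_{1},\ldots,i_{m}=1}^{n}\bigl|L(e_{i_{1}},\ldots,e_{i_{m}})\bigr|^{s}=\sum_{|\alpha|=m}\binom{m}{\alpha}^{1-s}|a_{\alpha}|^{s}.
\end{equation*}

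A short check shows $s\geq 1$ whenever $p\geq 2m$, so $1-s\leq 0$; combined with $\binom{m}{\alpha}\leq m!$ this yields $\binom{m}{\alpha}^{1-s}\geq (m!)^{1-s}$. Substituting back and taking $s$-th roots gives
\begin{equation*}
\left(\sum_{|\alpha|=m}|a_{\alpha}|^{s}\right)^{1/s}\leq (m!)^{(s-1)/s}\,C_{m,p}^{\mathbb{K}}\,\|L\|.
\end{equation*}

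Finally I would invoke the classical polarization inequality $\|L\|\leq \frac{m^{m}}{m!}\|P\|$, which converts the bound on the polar back into a bound on $P$:
\begin{equation*}
\left(\sum_{|\alpha|=m}|a_{\alpha}|^{s}\right)^{1/s}\leq C_{m,p}^{\mathbb{K}}\,\frac{m^{m}}{(m!)^{1/s}}\,\|P\|=C_{m,p}^{\mathbb{K}}\,\frac{m^{m}}{(m!)^{\frac{mp+p-2m}{2mp}}}\,\|P\|,
\end{equation*}
which is the claimed estimate. There is no real obstacle here beyond bookkeeping; the only point deserving attention is tracking the sign of $1-s$ to ensure the inequality $\binom{m}{\alpha}^{1-s}\geq (m!)^{1-s}$ is oriented correctly, and verifying that the exponents $(s-1)/s+1=1/s+1-1$ combine with the polarization factor $1/m!$ to produce precisely $(m!)^{-1/s}$.
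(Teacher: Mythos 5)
Your argument is correct and is essentially the paper's own proof: both reduce to the multilinear Hardy--Littlewood inequality by grouping the $m$-tuples $(i_1,\dots,i_m)$ by type $\alpha$, use Lemma \ref{lem:Multinomial} and the symmetry of $L$ to identify $|L(e_{i_1},\dots,e_{i_m})|=|a_\alpha|/\binom{m}{\alpha}$, bound the resulting factor $\binom{m}{\alpha}^{s-1}\leq (m!)^{s-1}$, and finish with the polarization estimate $\|L\|\leq \frac{m^m}{m!}\|P\|$. The only difference is cosmetic (you start from the multilinear sum and the paper starts from the coefficient sum), and your exponent bookkeeping, including the sign of $1-s$, checks out.
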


\begin{proof}
From Lemma \ref{lem:Multinomial} we have 
\begin{align*}
{\sum\limits_{\left\vert \alpha\right\vert =m}}\left\vert a_{\alpha
}\right\vert ^{\frac{2mp}{mp+p-2m}} & ={\sum\limits_{\left\vert
\alpha\right\vert =m}}\left( \binom{m}{\alpha}\left\vert
L(e_{1}^{\alpha_{1}},\ldots,e_{n}^{\alpha_{n}})\right\vert \right) ^{\frac{%
2mp}{mp+p-2m}} \\
& ={\sum\limits_{\left\vert \alpha\right\vert =m}}\binom{m}{\alpha}^{\frac{%
2mp}{mp+p-2m}}\left\vert L(e_{1}^{\alpha_{1}},\ldots,e_{n}^{\alpha
_{n}})\right\vert ^{\frac{2mp}{mp+p-2m}}.
\end{align*}
However, for every choice of $\alpha$, the term 
\begin{equation*}
\left\vert L(e_{1}^{\alpha_{1}},\ldots,e_{n}^{\alpha_{n}})\right\vert ^{%
\frac{2mp}{mp+p-2m}}
\end{equation*}
is repeated $\binom{m}{\alpha}$ times in the sum 
\begin{equation*}
{\sum\limits_{i_{1},\ldots,i_{m}=1}^{n}}\left\vert L(e_{i_{1}},\ldots
,e_{i_{m}})\right\vert ^{\frac{2mp}{mp+p-2m}}.
\end{equation*}
Thus 
\begin{equation*}
{\sum\limits_{\left\vert \alpha\right\vert =m}}\binom{m}{\alpha}^{\frac {2mp%
}{mp+p-2m}}\left\vert
L(e_{1}^{\alpha_{1}},\ldots,e_{n}^{\alpha_{n}})\right\vert ^{\frac{2mp}{%
mp+p-2m}}={\sum\limits_{i_{1},\ldots,i_{m}=1}^{n}}\binom{m}{\alpha}^{\frac{%
2mp}{mp+p-2m}}\frac{1}{\binom{m}{\alpha}}\left\vert
L(e_{i_{1}},\ldots,e_{i_{m}})\right\vert ^{\frac{2mp}{mp+p-2m}}
\end{equation*}
and, since 
\begin{equation*}
\binom{m}{\alpha}\leq m!
\end{equation*}
we have 
\begin{equation*}
{\sum\limits_{\left\vert \alpha\right\vert =m}}\binom{m}{\alpha}^{\frac {2mp%
}{mp+p-2m}}\left\vert
L(e_{1}^{\alpha_{1}},\ldots,e_{n}^{\alpha_{n}})\right\vert ^{\frac{2mp}{%
mp+p-2m}}\leq\left( m!\right) ^{\frac {mp-p+2m}{mp+p-2m}}{%
\sum\limits_{i_{1},\ldots,i_{m}=1}^{n}}\left\vert
L(e_{i_{1}},\ldots,e_{i_{m}})\right\vert ^{\frac{2mp}{mp+p-2m}}.
\end{equation*}
We finally obtain 
\begin{align*}
\left( {\sum\limits_{\left\vert \alpha\right\vert =m}}\left\vert a_{\alpha
}\right\vert ^{\frac{2mp}{mp+p-2m}}\right) ^{\frac{mp+p-2m}{2mp}} &
\leq\left( \left( m!\right) ^{\frac{mp-p+2m}{mp+p-2m}}{\sum\limits_{i_{1},%
\ldots,i_{m}=1}^{n}}\left\vert L(e_{i_{1}},\ldots,e_{i_{m}})\right\vert ^{%
\frac{2mp}{mp+p-2m}}\right) ^{\frac{mp+p-2m}{2mp}} \\
& =\left( m!\right) ^{\frac{mp-p+2m}{2mp}}\left( {\sum\limits_{i_{1},%
\ldots,i_{m}=1}^{n}}\left\vert L(e_{i_{1}},\ldots,e_{i_{m}})\right\vert ^{%
\frac{2mp}{mp+p-2m}}\right) ^{\frac{mp+p-2m}{2mp}} \\
& \leq\left( m!\right) ^{\frac{mp-p+2m}{2mp}}C_{m,p}^{\mathbb{K}}\left\Vert
L\right\Vert .
\end{align*}
On the other hand, it is well-known that 
\begin{equation*}
\left\Vert L\right\Vert \leq\frac{m^{m}}{m!}\left\Vert P\right\Vert
\end{equation*}
and, hence, 
\begin{align*}
\left( {\sum\limits_{\left\vert \alpha\right\vert =m}}\left\vert a_{\alpha
}\right\vert ^{\frac{2mp}{mp+p-2m}}\right) ^{\frac{mp+p-2m}{2mp}} & \leq
C_{m,p}^{\mathbb{K}}\left( m!\right) ^{\frac{mp-p+2m}{2mp}}\frac{m^{m}}{m!}%
\left\Vert P\right\Vert \\
& =C_{m,p}^{\mathbb{K}}\frac{m^{m}}{\left( m!\right) ^{\frac{mp+p-2m}{2mp}}}%
\left\Vert P\right\Vert .
\end{align*}
\end{proof}

\begin{remark}
Let us define the polarization constants for polynomials on $\ell_{p}$
spaces as 
\begin{equation*}
{\mathbb{K}}(m,p):=\inf\{M> 0\, :\, \|L\|\leq M\|P\|\},
\end{equation*}
where the infimum is taken over all $P\in{\mathcal{P}}(^{m}\ell_{p}^{n})$
and $L$ is the polar of $P$. Notice that ${\mathbb{K}}(m,p)$ may improve the
inequality 
\begin{equation*}
\left\Vert L\right\Vert \leq\frac{m^{m}}{m!}\left\Vert P\right\Vert
\end{equation*}
used in the proof of Proposition \ref{pro:first_approach}. Using ${\mathbb{K}%
}(m,p)$ instead of $\frac{m^{m}}{m!}$ in the proof of Proposition \ref%
{pro:first_approach} we end up with 
\begin{equation*}
D_{m,p}^{\mathbb{K}}=\left( m!\right) ^{\frac{mp-p+2m}{2mp}}C_{m,p}^{\mathbb{%
K}}{\mathbb{K}}(m,p).
\end{equation*}

The calculation of polarization constants has been a fruitful problem since
the 1970's and some facts are know about them. Specifically, Harris proved
in \cite{Harris} that 
\begin{equation*}
{\mathbb{C}}(m,p)\leq \left(\frac{m^m}{m!}\right)^\frac{|p-2|}{p},
\end{equation*}
for all $p\geq 1$, whenever $m$ is a power of $2$. Also, Harris \cite{Harris}
proved that 
\begin{equation*}
{\mathbb{C}}(m,\infty)\leq \frac{m^\frac{m}{2}(m+1)^\frac{m+1}{2}}{2^mm!}.
\end{equation*}
One more example was provided by Sarantopoulos \cite{Sarant} who proved that 
\begin{equation*}
{\mathbb{K}}(m,p)=\frac{m^\frac{m}{p}}{m!},
\end{equation*}
whenever $1\leq p\leq \frac{m}{m-1}$.
\end{remark}

\section{The real polynomial Hardy--Littlewood inequality: lower bounds for
the constants}

For $m\geq 2$ and $p\geq 2m$, let us denote by $H_{\mathbb{K},m,p}^{\mathrm{%
pol}}$ the optimal constants satisfying the polynomial Hardy-Littlewood
inequality with scalars in $\mathbb{K}.$ In this section we show that for
real scalars the polynomial Hardy-Littlewood inequality has at least an
hypercontractive growth.

\begin{theorem}
\label{88990}For all positive integers $m\geq 2$ and $2m\leq p<\infty $ we
have%
\begin{equation*}
\left( \sqrt[16]{2}\right) ^{m}\leq 2^{\frac{mp+p-6m+4}{4p}\cdot \frac{m-1}{m%
}}\leq H_{\mathbb{R},m,p}^{\mathrm{pol}}.
\end{equation*}
\end{theorem}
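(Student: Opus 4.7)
My plan is to prove the sharper inequality
\begin{equation*}
H_{\mathbb{R},m,p}^{\mathrm{pol}} \geq 2^{\frac{mp+p-6m+4}{4p}\cdot\frac{m-1}{m}}
\end{equation*}
by exhibiting an explicit $m$-homogeneous real polynomial on $\ell_p^n$ (for some $n \leq m$) whose coefficient $\ell_q$-norm (with $q = \tfrac{2mp}{mp+p-2m}$) and sup norm on the unit ball are both computable in closed form. The first inequality $(\sqrt[16]{2})^m \leq 2^{\frac{mp+p-6m+4}{4p}\cdot\frac{m-1}{m}}$ then reduces to the elementary algebraic fact $p(3m^2-4) \geq 8(3m-2)(m-1)$, which follows from $p \geq 2m$ and $m \geq 2$ (with equality at $m=2$, $p=4$).

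For the base case I take $P_2(y_1,y_2) = y_1^2 - y_2^2$: it has two coefficients of absolute value $1$ and sup norm $1$ on $B_{\ell_p^2}$ (attained at $y=(1,0)$), giving the ratio $2^{1/q}$. For general even $m = 2k$ I would then form the disjoint-block product
\begin{equation*}
P(x^{(1)},\ldots,x^{(k)}) = \prod_{j=1}^{k}\Bigl((x^{(j)}_1)^2 - (x^{(j)}_2)^2\Bigr)
\end{equation*}
on $\ell_p^{2k}$. Its $2^k$ nonzero coefficients are $\pm 1$, so the $\ell_q$-coefficient norm equals $2^{k/q}$. The sup norm equals $\sup \prod_j \|x^{(j)}\|_p^2$ subject to $\sum_j \|x^{(j)}\|_p^p \leq 1$; Lagrange multipliers give $\|x^{(j)}\|_p = k^{-1/p}$ and maximum value $k^{-2k/p}$, attained at $x^{(j)} = (k^{-1/p},0)$. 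Dividing yields ratio $2^{k/q}\, k^{2k/p} = 2^{(mp+p-6m)/(4p)}\, m^{m/p}$, which dominates the target after the straightforward exponent comparison. For odd $m$ I would append one linear factor on an extra coordinate to reach degree $m$, producing only lower-order corrections.

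The main obstacle is the sup-norm computation on $\ell_p^n$: unlike the $\ell_\infty^n$ setting that underlies the real Bohnenblust--Hille lower bound $(1.1)^m$, the $\ell_p^n$-unit ball does not decompose as a product of $\ell_p$-unit balls on the individual blocks, so one must optimize the allocation of total $\ell_p^p$-mass across the $k$ factors; this produces the factor $k^{-2k/p}$. When combined with the Hardy--Littlewood exponent $q = \tfrac{2mp}{mp+p-2m}$ (in place of the simpler Bohnenblust--Hille exponent $\tfrac{2m}{m+1}$), the arithmetic collapses to the somewhat opaque expression $\tfrac{mp+p-6m+4}{4p}\cdot\tfrac{m-1}{m}$; checking that this majorizes $\tfrac{m}{16}$ uniformly for $p \geq 2m$ gives the final simplification $(\sqrt[16]{2})^m$.
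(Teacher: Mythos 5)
Your proposal is correct and follows essentially the same route as the paper: the same product polynomial $\prod_j\bigl(x_{2j-1}^2-x_{2j}^2\bigr)$ (with a linear factor appended for odd $m$), the same sup-norm value $(m/2)^{-m/p}$ obtained by balancing the $\ell_p^p$-mass across the blocks, and the same resulting ratio $2^{(mp+p-6m)/(4p)}m^{m/p}$ followed by an elementary exponent comparison. Your explicit justification of the sup-norm optimization and the reduction of the first inequality to $p(3m^2-4)\geq 8(3m-2)(m-1)$ are, if anything, slightly more careful than the paper's version.
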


\bigskip

\begin{proof}
Let $m$ be an even integer. Consider the $m$-homogeneous polynomial $%
P_{m}:\ell _{p}^m\rightarrow \mathbb{R}$ given by 
\begin{equation*}
P_{m}(x_{1},\ldots ,x_{m})=\left( x_{1}^{2}-x_{2}^{2}\right) \left(
x_{3}^{2}-x_{4}^{2}\right) \cdots \left( x_{m-1}^{2}-x_{m}^{2}\right) .
\end{equation*}

Notice that 
\begin{equation*}
\left\Vert P_{m}\right\Vert =P_{m}\left( \frac{1}{\sqrt[p]{m/2}},0,\frac{1}{%
\sqrt[p]{m/2}},...,\frac{1}{\sqrt[p]{m/2}},0\right) =\left( \frac{1}{\sqrt[p]%
{m/2}}\right) ^{m}.
\end{equation*}%
From the Hardy-Littlewood inequality for $P_{m}$ we have 
\begin{equation*}
\left( {\sum\limits_{\left\vert \alpha \right\vert =m}}\left\vert a_{\alpha
}\right\vert ^{\frac{2mp}{mp+p-2m}}\right) ^{\frac{mp+p-2m}{2mp}}\leq H_{%
\mathbb{R},m,p}^{\mathrm{pol}}\left\Vert P_{m}\right\Vert ,
\end{equation*}%
i.e., 
\begin{equation*}
H_{\mathbb{R},m,p}^{\mathrm{pol}}\geq \frac{\left( 2^{\frac{m}{2}}\right) ^{%
\frac{mp+p-2m}{2mp}}}{\left( \frac{1}{\sqrt[p]{m/2}}\right) ^{m}}=2^{\frac{%
mp+p-2m}{4p}}\left( \frac{m}{2}\right) ^{\frac{m}{p}}=2^{\frac{mp+p-6m}{4p}%
}m^{\frac{m}{p}}\geq 2^{\frac{mp+p-6m}{4p}}.
\end{equation*}

If $m$ is odd, define%
\begin{equation*}
Q_{m}(x_{1},\ldots,x_{m})=\left( x_{1}^{2}-x_{2}^{2}\right) \left(
x_{3}^{2}-x_{4}^{2}\right) \cdots \left( x_{m-2}^{2}-x_{m-1}^{2}\right)
x_{m}.
\end{equation*}
Then%
\begin{equation*}
\left\Vert Q_{m}\right\Vert \leq\left\Vert P_{m-1}\right\Vert =\left( \frac{1%
}{\sqrt[p]{\left( m-1\right) /2}}\right) ^{m-1}.
\end{equation*}
From the Hardy-Littlewood inequality for $Q_{m}$ we have 
\begin{equation*}
\left( {\sum\limits_{\left\vert \alpha\right\vert =m}}\left\vert a_{\alpha
}\right\vert ^{\frac{2mp}{mp+p-2m}}\right) ^{\frac{mp+p-2m}{2mp}}\leq H_{%
\mathbb{R},m,p}^{\mathrm{pol}}\left\Vert Q_{m}\right\Vert ,
\end{equation*}
i.e., 
\begin{eqnarray*}
H_{\mathbb{R},m,p}^{\mathrm{pol}} & \geq & \frac{\left( 2^{\frac{m-1}{2}%
}\right) ^{\frac{mp+p-2m}{2mp}}}{\left( \frac{1}{\sqrt[p]{\left( m-1\right)
/2}}\right) ^{m-1}} \\
& = & 2^{\frac{\left( mp+p-2m\right) \left( m-1\right) }{4mp}}\left( \frac{%
m-1}{2}\right) ^{\frac{m-1}{p}} \\
& = & 2^{\frac{mp+p-6m+4}{4p}\cdot \frac{m-1}{m} }\left( m-1\right) ^{\frac{%
m-1}{p}} \\
& \geq & 2^{\frac{mp+p-6m+4}{4p}\cdot\frac{m-1}{m}}.
\end{eqnarray*}
\end{proof}

\begin{remark}
From the estimates of the last proof note that if $m$ is even, and $m\geq 4$%
, then 
\begin{equation}
\begin{array}{rcl}
H_{\mathbb{R},m,p}^{\mathrm{pol}} & \geq  & 2^{\frac{mp+p-6m}{4p}}m^{\frac{m%
}{p}}\vspace{0.2cm} \\ 
& \geq  & 2^{\frac{mp+p-6m}{4p}}\left( 2^{\frac{3}{2}}\right) ^{\frac{m}{p}}%
\vspace{0.2cm} \\ 
& = & \left( \sqrt[4]{2}\right) ^{m+1}.%
\end{array}
\label{par}
\end{equation}%
If $m$ is odd, and $m\geq 5$, then 
\begin{equation}
\begin{array}{rcl}
H_{\mathbb{R},m,p}^{\mathrm{pol}} & \geq  & 2^{\frac{mp+p-6m+4}{4p}\cdot 
\frac{m-1}{m}}\left( m-1\right) ^{\frac{m-1}{p}}\vspace{0.2cm} \\ 
& \geq  & 2^{\frac{mp+p-6m+4}{4p}\cdot \frac{m-1}{m}}\left( 2^{\frac{3}{2}%
}\right) ^{\frac{m-1}{p}}\vspace{0.2cm} \\ 
& = & \allowbreak 2^{\frac{pm^{2}+4m-p-4}{4mp}}\vspace{0.2cm} \\ 
& \geq  & \left( \sqrt[4]{2}\right) ^{m-\frac{1}{m}}.%
\end{array}
\label{impar}
\end{equation}%
Thus, by (\ref{par}) and (\ref{impar}), if $m\geq 4$ 
\begin{equation*}
H_{\mathbb{R},m,p}^{\mathrm{pol}}\geq \left( \sqrt[4]{2}\right) ^{m-\frac{1}{%
m}}.
\end{equation*}
\end{remark}

\section{Real versus complex estimates}

As it happens with the constants of the Bohnenblust-Hille inequality, we
observe that%
\begin{equation}
H_{\mathbb{R},m,p}^{\mathrm{pol}}\leq 2^{m-1}H_{\mathbb{C},m,p}^{\mathrm{pol}%
}.  \label{989}
\end{equation}%
In fact, from \cite{muno} we know that if $P:\ell _{p}\rightarrow \mathbb{R}$
is an $m$-homogeneous polynomial and $P_{\mathbb{C}}:\ell _{p}\rightarrow 
\mathbb{C}$ is the same polynomial, then%
\begin{equation*}
\left\Vert P_{\mathbb{C}}\right\Vert \leq 2^{m-1}\left\Vert P\right\Vert .
\end{equation*}%
We thus obtain (\ref{989}). So if one succeeds in proving that the constants
of the complex Hardy-Littlewood polynomial inequality have a subexponential
growth (as it happens with the constants of the complex polynomial
Bohnenblust-Hille inequality) then we immediately conclude that the
constants of the real polynomial Hardy littlewood inequality have an
exponential growth and this result is optimal (due to Theorem \ref{88990}).

\section{The complex polynomial Hardy-Littlewood inequality: trying to find
upper estimates}

\bigskip In this section we try to improve the estimates for the upper
bounds of the complex polynomial Hardy-Littlewwod inequality from Section 2.
However, for $p<\infty $ the validity of our estimates depend on a
conjecture (see (\ref{888})). For the case $p=\infty $ our results are
exactly those from (\cite[Remark 2.2]{bps}). 

The following multi-index notation will come in handy for us: for positive
integers $m,n$, we set 
\begin{align*}
\mathcal{M}(m,n)& :=\left\{ \mathbf{i}=(i_{1},\dots ,i_{m});\,i_{1},\dots
,i_{m}\in \{1,\dots ,n\}\right\} , \\
\mathcal{J}(m,n)& :=\left\{ \mathbf{i}\in \mathcal{M}(m,n);\,i_{1}\leq
i_{2}\leq \dots \leq i_{m}\right\} ,
\end{align*}%
and for $k=1,\dots ,m$, $\mathcal{P}_{k}(m)$ denotes the set of the subsets
of $\{1,\dots ,m\}$ with cardinality $k$. For $S=\{s_{1},\dots ,s_{k}\}\in 
\mathcal{P}_{k}(m)$, its complement will be $\widehat{S}:=\{1,\dots
,m\}\setminus S$, and $\mathbf{i}_{S}$ shall mean $(i_{s_{1}},\dots
,i_{s_{k}})\in \mathcal{M}(k,n)$. For a multi-index $\mathbf{i}\in \mathcal{M%
}(m,n)$, we denote by $|\mathbf{i}|$ the cardinality of the set of
multi-indexes $\mathbf{j}\in \mathcal{M}(m,n)$ such that there is a
permutation $\sigma $ of $\{1,\dots ,m\}$ with $i_{\sigma (k)}=j_{k}$, for
every $k=1,\dots ,m$. The equivalence class of $\mathbf{i}$ is denoted by $[%
\mathbf{i].}$ When we write $c_{[\mathbf{i]}}$ for $\mathbf{i\in }\mathcal{M}%
(m,n)$ we mean $c_{\mathbf{j}}$ for $\mathbf{j}\in \mathcal{J}(m,n)$ and $%
\mathbf{j}$ equivalent to $\mathbf{i}.$

The following very recent generalization of the famous Blei inequality will
be crucial for our estimates (see \cite[Remark 2.2]{bps}).

\begin{lemma}[Bayart, Pellegrino, Seoane, \protect\cite{bps}]
\label{blei.interp} Let $m,n$ positive integers, $1\leq k\leq m$ and $1\leq
s\leq q$, satisfying $\frac{m}{\rho }=\frac{k}{s}+\frac{m-k}{q}$. Then for
all scalar matrix $\left( a_{\mathbf{i}}\right) _{\mathbf{i}\in\mathcal{M}%
(m,n)}$,

\begin{equation*}
\left( \sum_{\mathbf{i}\in\mathcal{M}(m,n)}\left\vert a_{\mathbf{i}%
}\right\vert ^{\rho}\right) ^{\frac{1}{\rho}}\leq\prod_{S\in\mathcal{P}%
_{k}(m)}\left( \sum_{\mathbf{i}_{S}}\left( \sum_{\mathbf{i}_{\widehat{S}%
}}\left\vert a_{\mathbf{i}}\right\vert ^{q}\right) ^{\frac{s}{q}}\right) ^{%
\frac{1}{s}\cdot\frac{1}{\binom{m}{k}}}.
\end{equation*}
\end{lemma}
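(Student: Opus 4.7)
The plan is to prove this via a generalized H\"older inequality applied simultaneously across the $\binom{m}{k}$ subsets $S \in \mathcal{P}_k(m)$. Denote by
\[
A_S := \Bigl( \sum_{\mathbf{i}_S} \bigl( \sum_{\mathbf{i}_{\widehat{S}}} |a_{\mathbf{i}}|^{q} \bigr)^{s/q} \Bigr)^{1/s}
\]
the mixed norm associated with $S$, so that the claim reads $\|a\|_{\ell^\rho} \leq \prod_S A_S^{1/\binom{m}{k}}$. The starting observation is that the exponent condition $\frac{m}{\rho} = \frac{k}{s} + \frac{m-k}{q}$ expresses $1/\rho$ as a convex combination of $1/s$ and $1/q$ with weights $k/m$ and $(m-k)/m$ respectively, which is exactly the arithmetic needed to interpolate from an $\ell^{\rho}$ norm into a mixed $\ell^{s}(\ell^{q})$ norm via H\"older.

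Concretely, I would write $|a_{\mathbf{i}}|^{\rho} = \prod_{S \in \mathcal{P}_k(m)} |a_{\mathbf{i}}|^{\rho/\binom{m}{k}}$ and apply H\"older's inequality to $\sum_{\mathbf{i}}\prod_S(\cdot)$, with the crucial point that the H\"older split attached to subset $S$ must be \emph{asymmetric} between the $\mathbf{i}_S$ and $\mathbf{i}_{\widehat{S}}$ directions: the inner sum over $\mathbf{i}_{\widehat{S}}$ is processed at exponent $q$ and the outer sum over $\mathbf{i}_S$ at exponent $s$, so that this factor produces exactly $A_S^{\rho/\binom{m}{k}}$. Multiplying the $\binom{m}{k}$ bounds together and extracting $\rho$th roots would then yield the desired estimate. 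Equivalently, one can view the argument as $\binom{m}{k}$ parallel uses of (a variant of) the classical Blei inequality, one per subset $S$, symmetrized via the geometric mean.

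The main obstacle is the exponent bookkeeping: one must verify that, after all the H\"older steps, the total exponent placed on each $|a_{\mathbf{i}}|$ sums to exactly $\rho$. Each coordinate $j \in \{1,\dots,m\}$ sits in $\binom{m-1}{k-1}$ subsets $S$ as an outer variable and in $\binom{m-1}{k}$ subsets as an inner variable, and the key arithmetic identity
\[
\binom{m-1}{k-1}\cdot \frac{1}{s} + \binom{m-1}{k}\cdot \frac{1}{q} = \frac{\binom{m}{k}}{m}\left( \frac{k}{s} + \frac{m-k}{q} \right) = \frac{\binom{m}{k}}{\rho}
\]
(a rewriting of the hypothesis via $\binom{m-1}{k-1} = \frac{k}{m}\binom{m}{k}$ and $\binom{m-1}{k} = \frac{m-k}{m}\binom{m}{k}$) is what forces the combinatorics to close. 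Once this matching of exponents is in hand, the proof is just an assembly of the H\"older estimates, which I expect to be routine.
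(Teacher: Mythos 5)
First, a point of reference: the paper does not prove Lemma \ref{blei.interp} at all --- it is quoted from \cite[Remark 2.2]{bps} --- so there is no in-paper argument to compare against; I am judging your sketch against the standard proof of this interpolation inequality. Your overall plan is the right one and is essentially that standard proof: write $|a_{\mathbf{i}}|^{\rho}$ as a geometric mean over the $\binom{m}{k}$ subsets and interpolate, and your exponent bookkeeping is correct --- each coordinate $j$ lies in $\binom{m-1}{k-1}$ sets $S$ and in $\binom{m-1}{k}$ complements, and the identity $\binom{m-1}{k-1}\frac{1}{s}+\binom{m-1}{k}\frac{1}{q}=\frac{\binom{m}{k}}{\rho}$ is exactly what makes the hypothesis $\frac{m}{\rho}=\frac{k}{s}+\frac{m-k}{q}$ the right one.

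However, the step you dismiss as ``just an assembly of the H\"older estimates'' conceals the two genuinely nontrivial points, so as written there is a gap. First, a single application of H\"older to $\sum_{\mathbf{i}}\prod_{S}|a_{\mathbf{i}}|^{\rho/\binom{m}{k}}$ can only produce factors of the form $\bigl(\sum_{\mathbf{i}}|a_{\mathbf{i}}|^{r_S}\bigr)^{1/p_S}$, i.e.\ unmixed $\ell^{r}$ norms of the whole array; it cannot generate the mixed norms $A_S$. One must instead apply H\"older $m$ times, once in each coordinate $i_j$, using exponent $s$ for those $S$ with $j\in S$ and exponent $q$ for those with $j\in\widehat{S}$ (the mixed-norm H\"older inequality, proved by induction on $m$). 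Second --- and this is where the hypothesis $s\leq q$ enters, which your sketch never invokes --- after the coordinate-wise H\"older steps the factor attached to $S$ carries its $s$-sums and $q$-sums nested in the fixed order $i_1,\dots,i_m$, with the coordinates of $S$ and of $\widehat{S}$ interleaved, whereas $A_S$ has all the $q$-sums innermost. Passing from the interleaved arrangement to $A_S$ requires Minkowski's inequality
\[
\Bigl(\sum_{x}\Bigl(\sum_{y}|b_{x,y}|^{q}\Bigr)^{s/q}\Bigr)^{1/s}\;\geq\;\Bigl(\sum_{y}\Bigl(\sum_{x}|b_{x,y}|^{s}\Bigr)^{q/s}\Bigr)^{1/q},\qquad s\leq q,
\]
applied repeatedly to push each $q$-sum inside each $s$-sum; this only increases the factor, which is the correct direction for the upper bound. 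Without an explicit induction-plus-Minkowski scheme (or an equivalent device) the argument does not close, even though your plan and your arithmetic are sound.
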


Let us use the following notation: $S_{\ell _{p}^{n}}$ denotes the unit
sphere on $\ell _{p}^{n}$ if $p<\infty $, and $S_{\ell _{\infty }^{n}}$
denotes the $n$-dimensional torus. More precisely: for $p\in \left( 0,\infty
\right) $%
\begin{equation*}
S_{\ell _{p}^{n}}:=\left\{ \mathbf{z}=\left( z_{1},...,z_{n}\right) \in 
\mathbb{C}^{n}:\left\Vert \mathbf{z}\right\Vert _{\ell _{p}^{n}}=1\right\} ,
\end{equation*}%
and%
\begin{equation*}
S_{\ell _{\infty }^{n}}:=\mathbb{T}^{n}=\left\{ \mathbf{z}=\left(
z_{1},...,z_{n}\right) \in \mathbb{C}^{n}:\left\vert z_{i}\right\vert
=1\right\} .
\end{equation*}
For $p\in \left[ 1,\infty \right] $, let $\mu _{p}^{n}$ be the normalized
Lebesgue measure on $S_{\ell _{p}^{n}}$. For the sake of simplicity, $\mu
_{p}^{n}$ will be simply denoted by $\mu ^{n}.$ The following inequality is
a variant of a result due to Bayart \cite[Theorem 9]{bayart} (see, for
instance, \cite[Lemma 5.1]{bps}).

\begin{lemma}[Bayart]
\label{bay_in} Let $1\leq s\leq 2$. For every $m$-homogeneous polynomial $P(%
\mathbf{z})=\sum_{|\alpha |=m}a_{\alpha }\mathbf{z}^{\alpha }$ on $\mathbb{C}%
^{n}$ with values in $\mathbb{C}$, we have 
\begin{equation*}
\left( \sum_{|\alpha |=m}\left\vert a_{\alpha }\right\vert ^{2}\right) ^{%
\frac{1}{2}}\leq \left( \frac{2}{s}\right) ^{\frac{m}{2}}\left( \int_{%
\mathbb{T}^{n}}\left\vert P(\mathbf{z})\right\vert ^{s}d\mu ^{n}(\mathbf{z}%
)\right) ^{\frac{1}{s}}.
\end{equation*}
\end{lemma}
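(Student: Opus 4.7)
The plan is to recast the inequality as a norm comparison on $\mathbb{T}^n$. By orthogonality of distinct monomials on the polytorus we have $\|P\|_{L^2(\mathbb{T}^n)}^2=\sum_{|\alpha|=m}|a_\alpha|^2$, so the claim is equivalent to the hypercontractive estimate
\begin{equation*}
\|P\|_{L^2(\mathbb{T}^n)} \le \left(\frac{2}{s}\right)^{m/2}\|P\|_{L^s(\mathbb{T}^n)}
\end{equation*}
for every $m$-homogeneous polynomial $P$ and every $1\le s\le 2$. I would then proceed by induction on the homogeneity degree $m$.

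For the base case $m=1$, one has $P(\mathbf z)=\sum_i a_iz_i$, and since the coordinate functions are independent Steinhaus random variables under $\mu^n$, the desired bound $(\sum|a_i|^2)^{1/2}\le (2/s)^{1/2}\|P\|_{L^s}$ is the classical Khintchine--Kahane inequality for complex Steinhaus variables, whose sharp constant in the range $1\le s\le 2$ is exactly $(2/s)^{1/2}$. For the inductive step I would separate the last variable: write $P(\mathbf w,z_n)=\sum_{k=0}^m z_n^kQ_k(\mathbf w)$ with $Q_k$ an $(m-k)$-homogeneous polynomial on $\mathbb C^{n-1}$. Parseval in $z_n$ yields the pointwise identity $\|P(\mathbf w,\cdot)\|_{L^2(\mathbb T)}^2=\sum_k|Q_k(\mathbf w)|^2$; Minkowski's integral inequality in the direction $2/s\ge 1$ transfers this to
\begin{equation*}
\Bigl(\sum_k \|Q_k\|_{L^s(\mathbb T^{n-1})}^2\Bigr)^{1/2} \le \Bigl\|\bigl(\sum_k|Q_k|^2\bigr)^{1/2}\Bigr\|_{L^s(\mathbb T^{n-1})}=\|P\|_{L^s(\mathbb T^n)},
\end{equation*}
after which the inductive hypothesis is applied to each $Q_k$ and the results are summed.

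The main obstacle is that the naive recursion just sketched inflates the constant: the induction hypothesis produces $(2/s)^{(m-k)/2}$ for each $Q_k$, and replacing all these by the uniform bound $(2/s)^{m/2}$ costs an extra factor of $(2/s)^{1/2}$, yielding $(2/s)^{(m+1)/2}$ rather than the sharp $(2/s)^{m/2}$. To obtain the correct exponent one must instead run the Khintchine--Steinhaus step on the polar symmetric $m$-linear form $L$ associated to $P$, applying the one-variable inequality once in each of the $m$ tensor slots so that the $m$ factors of $(2/s)^{1/2}$ are exactly the $m$ applications; one then transfers the resulting bound on the coefficient vector of $L$ back to that of $P$ using the identity $a_\alpha=\binom{m}{\alpha}L(e_1^{\alpha_1},\dots,e_n^{\alpha_n})$ from Lemma \ref{lem:Multinomial} together with the multinomial identity $\sum_{|\alpha|=m}\binom{m}{\alpha}^2 |L(e_1^{\alpha_1},\dots,e_n^{\alpha_n})|^2=\sum_{i_1,\dots,i_m}|L(e_{i_1},\dots,e_{i_m})|^2\binom{m}{\alpha}$ used in Proposition \ref{pro:first_approach}. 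An alternative, and in my view cleaner, route is to invoke the Weissler-type hypercontractivity of the Poisson projection $P_r$ on $\mathbb T^n$: since $P_rP=r^mP$ for an $m$-homogeneous $P$, choosing $r=\sqrt{s/2}$ in the $L^s\!\to\!L^2$ hypercontractive bound produces the inequality in a single step with precisely the constant $(2/s)^{m/2}$.
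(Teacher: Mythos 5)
The paper never proves this lemma: it is quoted as a known variant of Bayart's Theorem 9 (via \cite[Lemma 5.1]{bps}), so there is no internal argument to compare against. Of the two routes you offer, the one you relegate to an afterthought is the correct and complete one, and it is in fact exactly the argument behind the cited result: Weissler's one--dimensional hypercontractive bound $\Vert P_r f\Vert_{L^2(\mathbb{T})}\leq \Vert f\Vert_{L^s(\mathbb{T})}$ for $r=\sqrt{s/2}$, tensorized to $\mathbb{T}^n$ by Minkowski's integral inequality (using $2\geq s$), together with $P_rP=r^mP$ for $m$-homogeneous $P$, gives $r^m\Vert P\Vert_{L^2}\leq\Vert P\Vert_{L^s}$, i.e.\ the stated inequality with constant $(2/s)^{m/2}$. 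You should promote that paragraph to be the proof.

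Your primary route, by contrast, has a gap that you correctly diagnose but do not close. The induction on $m$ loses a factor $(2/s)^{1/2}$, as you note, and the proposed repair through the polar form $L$ does not rescue the constant for two concrete reasons. First, even if you apply the Steinhaus--Khintchine inequality once in each of the $m$ slots of $L$, what you control is $\bigl(\sum_{\mathbf{i}}|L(e_{i_1},\dots,e_{i_m})|^2\bigr)^{1/2}$ in terms of $\Vert L\Vert_{L^s((\mathbb{T}^n)^m)}$; but since $a_\alpha=\binom{m}{\alpha}L(e_1^{\alpha_1},\dots,e_n^{\alpha_n})$, one has $\sum_{|\alpha|=m}|a_\alpha|^2=\sum_{\mathbf{i}}\binom{m}{\alpha}|L(e_{i_1},\dots,e_{i_m})|^2$, which is \emph{larger} than $\sum_{\mathbf{i}}|L(e_{i_1},\dots,e_{i_m})|^2$ by a factor as big as $m!$ --- the multinomial identity works against you here, not for you. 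Second, you would still need to dominate $\Vert L\Vert_{L^s((\mathbb{T}^n)^m)}$ by $\Vert P\Vert_{L^s(\mathbb{T}^n)}$, and for $s<2$ this is a polarization-type estimate costing roughly $m^m/m!$; either loss already destroys the constant $(2/s)^{m/2}$. A smaller point: your assertion that $(2/s)^{1/2}$ is the \emph{sharp} Khintchine--Steinhaus constant on $1\leq s\leq 2$ is false (at $s=1$ the sharp constant is $2/\sqrt{\pi}<\sqrt{2}$); the inequality with $(2/s)^{1/2}$ is nonetheless true, being the $m=1$ case of Weissler's bound, so this only affects the wording, not the validity of the base case.
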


For $m\in \lbrack 2,\infty ]$ let us define $p_{0}(m)$ as the infimum of the
values of $p\in \left[ 2m,\infty \right] $ such that for all $1\leq s\leq 
\frac{2p}{p-2}$ there is a $K_{s,p}>0$ such that%
\begin{equation}
\left( \sum_{|\alpha |=m}\left\vert a_{\alpha }\right\vert ^{\frac{2p}{p-2}%
}\right) ^{\frac{p-2}{2p}}\leq K_{s,p}^{m}\left( \int_{S_{\ell
_{p}^{n}}}\left\vert P(\mathbf{z})\right\vert ^{s}d\mu ^{n}(\mathbf{z}%
)\right) ^{\frac{1}{s}}  \label{888}
\end{equation}%
for all positive integers $n$ and all $m$-homogeneous polynomials $P:\mathbb{%
C}^{n}\rightarrow \mathbb{C}$. For the sake of simplicity $p_{0}(m)$ will be
simply denoted by $p_{0}.$ From Bayart's lemma we know that this definition
makes sense, since from Bayart's lemma we know that (\ref{888}) is valid for 
$p=\infty .$ We conjecture that $p_{0}\leq m^{2}.$


Now, let us state and prove the main result of this section. The argument of
the proof follows the lines of that in \cite{ann, bps}. We will use the
following result due L. Harris (see \cite[Theorem 1]{Harris}):

\begin{lemma}[Harris]
\label{haharr} Let $X$ be a complex normed linear space. If $P$ is a
homogeneous polynomial of degree $m$ on $X$ and $L$ is the polar of $P$,
then, for any nonnegative integers $m_1,...,m_k$ with $m_1+\cdots+m_k=m$ and
for any $x^{(1)},...,x^{(k)}$ unit vectors in $X$, 
\begin{equation*}
|L( \underbrace{x^{(1)},...,x^{(1)}}_{m_1 \text{ times}},...,\underbrace{%
x^{(k)},...,x^{(k)}}_{m_k \text{ times}})|\leq \frac{m_1 ! \cdots m_k !\cdot
m^m}{m_1^{m_1}\cdots m_k^{m_k}\cdot m!}\|P\|.
\end{equation*}
\end{lemma}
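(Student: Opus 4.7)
My plan is to reduce the inequality to a statement about a polynomial in $k$ complex variables and then recover the polar value as a single Taylor coefficient. First I would define $Q:\mathbb{C}^{k}\to\mathbb{C}$ by
$$Q(z_{1},\ldots,z_{k}):=P\bigl(z_{1}x^{(1)}+\cdots+z_{k}x^{(k)}\bigr).$$
Expanding via multilinearity and symmetry of the polar form gives
$$Q(z)=\sum_{|\alpha|=m}\binom{m}{\alpha}z^{\alpha}\,L\bigl((x^{(1)})^{\alpha_{1}},\ldots,(x^{(k)})^{\alpha_{k}}\bigr),$$
so the coefficient of $z_{1}^{m_{1}}\cdots z_{k}^{m_{k}}$ in $Q$ equals $\binom{m}{m_{1},\ldots,m_{k}}\,L((x^{(1)})^{m_{1}},\ldots,(x^{(k)})^{m_{k}})$. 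The task is thereby reduced to bounding this single Taylor coefficient. If some $m_{i}=0$ we simply drop the corresponding variable, so I may assume all $m_{i}\geq 1$.

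Next I would extract the coefficient via the multidimensional Cauchy integral formula: for any radii $r_{1},\ldots,r_{k}>0$,
$$\left|\,\mathrm{coef}_{z_{1}^{m_{1}}\cdots z_{k}^{m_{k}}}(Q)\,\right|\leq\frac{\sup_{|z_{i}|=r_{i}}|Q(z)|}{r_{1}^{m_{1}}\cdots r_{k}^{m_{k}}}.$$
Since each $x^{(i)}$ is a unit vector, the triangle inequality gives $\|\sum_{i}z_{i}x^{(i)}\|\leq\sum_{i}|z_{i}|$; so imposing the constraint $r_{1}+\cdots+r_{k}=1$ makes the supremum in the numerator at most $\|P\|$.

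Finally, I would optimize the radii: minimize $r_{1}^{-m_{1}}\cdots r_{k}^{-m_{k}}$ subject to $\sum_{i}r_{i}=1$. By weighted AM--GM (or a single Lagrange multiplier computation) the minimum is attained at $r_{i}=m_{i}/m$ and equals $m^{m}/(m_{1}^{m_{1}}\cdots m_{k}^{m_{k}})$. Combining the three steps and dividing through by $\binom{m}{m_{1},\ldots,m_{k}}=m!/(m_{1}!\cdots m_{k}!)$ gives exactly
$$|L((x^{(1)})^{m_{1}},\ldots,(x^{(k)})^{m_{k}})|\leq \frac{m_{1}!\cdots m_{k}!\cdot m^{m}}{m_{1}^{m_{1}}\cdots m_{k}^{m_{k}}\cdot m!}\,\|P\|,$$
as claimed. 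There is no really hard step; the only points requiring care are verifying the multinomial identification of the Taylor coefficient with the polar value and checking that the weighted AM--GM optimum does fall at $r_{i}=m_{i}/m$. Both are routine, so the argument should go through cleanly.
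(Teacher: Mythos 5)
The paper gives no proof of this lemma; it is quoted verbatim from Harris \cite{Harris} (Theorem 1 there), so there is nothing in the text to compare against. Your argument is correct and is in fact the classical proof of Harris's polarization inequality: the identification of $\binom{m}{m_1,\dots,m_k}\,L\bigl((x^{(1)})^{m_1},\dots,(x^{(k)})^{m_k}\bigr)$ as the $z_1^{m_1}\cdots z_k^{m_k}$-coefficient of $z\mapsto P\bigl(\sum_i z_i x^{(i)}\bigr)$, the Cauchy estimate on the polydisc $|z_i|=r_i$ with $\sum_i r_i=1$ (which is where the hypothesis that $X$ is a \emph{complex} space is used), and the weighted AM--GM choice $r_i=m_i/m$ all go through exactly as you describe, with the $m_i=0$ cases handled by the conventions $0!=0^0=1$. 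No gaps.
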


\begin{theorem}
\label{hardy}Let $m\in \lbrack 2,\infty ]$ and $1\leq k\leq m-1.$ If $%
p_{0}(m-k)<p\leq \infty $ (and $p=\infty $ if $p_{0}(m-k)=\infty $) then,
for every $m$-homogeneous polynomial $P:\ell _{p}^{n}\rightarrow \mathbb{C}$%
, defined by $P(\mathbf{z})=\sum_{|\alpha |=m}a_{\alpha }\mathbf{z}^{\alpha
},$ we have 
\begin{align*}
& \left( \sum_{|\alpha |=m}\left\vert a_{\alpha }\right\vert ^{\frac{2mp}{%
mp+p-2m}}\right) ^{\frac{mp+p-2m}{2mp}} \\
& \leq K_{\frac{2kp}{kp+p-2k},p}^{m-k}\cdot \frac{m^{m}}{(m-k)^{m-k}}\cdot
\left( \frac{(m-k)!}{m!}\right) ^{\frac{p-2}{2p}}\left( \frac{2}{\sqrt{\pi }}%
\right) ^{\frac{2k\left( k-1\right) }{p}}\cdot \left( B_{\mathbb{C},k}^{%
\mathrm{mult}}\right) ^{\frac{p-2k}{p}}\Vert P\Vert ,
\end{align*}%
where $B_{\mathbb{C},k}^{\mathrm{mult}}$ is any constant satisfying the $k$%
-linear Bohnenblust-Hille inequality.
\end{theorem}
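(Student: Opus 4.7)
The strategy I would follow is the one Bayart--Pellegrino--Seoane use in \cite{bps} for the complex polynomial Bohnenblust--Hille inequality, adapted to $\ell_p^n$ by invoking the hypercontractive inequality (\ref{888}) in place of Bayart's Lemma \ref{bay_in}, together with the $k$-linear complex Bohnenblust--Hille inequality. Roughly, Blei's interpolation is used to split the $m$ polar-form indices into an ``inner'' block of $m-k$ indices absorbed by (\ref{888}) and an ``outer'' block of $k$ indices absorbed by $k$-linear Bohnenblust--Hille, with Harris's Lemma \ref{haharr} connecting $\|L\|$ to $\|P\|$ via a partial polarization.

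Concretely: first use Lemma \ref{lem:Multinomial} to rewrite the sum of $|a_\alpha|^\rho$, with $\rho=\tfrac{2mp}{mp+p-2m}$, as a weighted sum over $\mathbf{i}\in\mathcal{M}(m,n)$ of $|L(e_{\mathbf{i}})|^\rho$. Then apply Blei's lemma (Lemma \ref{blei.interp}) with
$$
s=\frac{2kp}{kp+p-2k},\qquad q=\frac{2p}{p-2};
$$
a direct check gives $m/\rho=k/s+(m-k)/q$. For each $S\in\mathcal{P}_k(m)$ and each fixed $\mathbf{i}_S\in\mathcal{M}(k,n)$, identify the inner sum $\sum_{\mathbf{i}_{\hat S}}|L(e_{\mathbf{i}})|^q$, up to multinomial factors, with the sum of $q$-th powers of the coefficients of the $(m-k)$-homogeneous polynomial
$$
Q_{\mathbf{i}_S}(z):=L(\underbrace{z,\ldots ,z}_{m-k},e_{i_{s_1}},\ldots,e_{i_{s_k}}).
$$
Since $p>p_0(m-k)$ and $s\le q=\tfrac{2p}{p-2}$, inequality (\ref{888}) controls this by $K_{s,p}^{q(m-k)}\|Q_{\mathbf{i}_S}\|_{L^s(S_{\ell_p^n})}^q$ (the multinomial discrepancy is what will eventually give $\bigl((m-k)!/m!\bigr)^{(p-2)/(2p)}$). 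Raising to the $s/q$ power, summing over $\mathbf{i}_S$, and applying Fubini reduces the estimate to bounding, for each $z\in S_{\ell_p^n}$,
$$
\sum_{\mathbf{i}_S\in\mathcal{M}(k,n)}\bigl|T_z(e_{i_{s_1}},\ldots,e_{i_{s_k}})\bigr|^s,\qquad T_z(w_1,\ldots ,w_k):=L(z^{m-k},w_1,\ldots ,w_k),
$$
a $k$-linear object.

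For each fixed $z$, since $s\in[2k/(k+1),2]$ for $p\geq 2k$, I would use the inclusion $\ell_{2k/(k+1)}\hookrightarrow \ell_s$ followed by the complex $k$-linear Bohnenblust--Hille inequality applied to $T_z$ (viewed on $\ell_\infty^n$), interpolated against a trivial $\ell_2$-type bound to produce the exponent $(p-2k)/p$ on $B_{\mathbb{C},k}^{\mathrm{mult}}$ together with the Khintchine/Bayart-type factor $(2/\sqrt{\pi})^{2k(k-1)/p}$. Harris's Lemma \ref{haharr} with the splitting $(m-k,1,\ldots,1)$ then gives $\|T_z\|\le \tfrac{(m-k)!\,m^m}{(m-k)^{m-k}\,m!}\|P\|$, producing the Harris factor $m^m/(m-k)^{m-k}$. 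By the symmetry of $L$, each of the $\binom{m}{k}$ Blei factors contributes the same estimate, so the product collapses to a single expression raised to the power $1/s$, and collecting constants yields the claimed bound. The main obstacle is the careful bookkeeping in this last interpolation step -- verifying that the inclusion, Bohnenblust--Hille, and $L^s$--$L^2$ comparison combine to give \emph{exactly} the exponents $(p-2k)/p$ on $B_{\mathbb{C},k}^{\mathrm{mult}}$ and $2k(k-1)/p$ on $2/\sqrt{\pi}$, and that the multinomial factors from the passage between $L(e_{\mathbf{i}})$ and the coefficients of $Q_{\mathbf{i}_S}$ telescope to the exponent $(p-2)/(2p)$ of $(m-k)!/m!$ in the statement.
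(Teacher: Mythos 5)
Your proposal matches the paper's proof in every essential respect: the same Blei decomposition with $s_k=\frac{2kp}{kp+p-2k}$ and $q=\frac{2p}{p-2}$, the same $(m-k)$-homogeneous restriction polynomial controlled by (\ref{888}), the same application of Harris's Lemma \ref{haharr} with the splitting $(m-k,1,\dots,1)$, and the same bookkeeping producing the factor $\left(\frac{(m-k)!}{m!}\right)^{\frac{p-2}{2p}}$. The only cosmetic difference is that where you sketch re-deriving the $k$-linear bound by interpolating Bohnenblust--Hille against a Khintchine-type endpoint, the paper simply cites the $k$-linear Hardy--Littlewood constant $\left(\frac{2}{\sqrt{\pi}}\right)^{\frac{2k(k-1)}{p}}\left(B_{\mathbb{C},k}^{\mathrm{mult}}\right)^{\frac{p-2k}{p}}$ from \cite{apd} as a black box.
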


\begin{proof}
We can also write 
\begin{equation*}
P\left( \mathbf{z}\right) =\sum_{\mathbf{i\in}\mathcal{J}(m,n)}c_{\mathbf{i}%
}z_{i_{1}}....z_{i_{m}}.
\end{equation*}
Consider 
\begin{align*}
\rho & =\frac{2mp}{mp+p-2m}, \\
s_{k} & =\frac{2kp}{kp+p-2k}, \\
q & =\frac{2p}{p-2}.
\end{align*}
Note that%
\begin{equation*}
s_{k}\leq2<q,
\end{equation*}%
\begin{equation*}
\frac{m}{\rho}=\frac{mp+p-2m}{2p},
\end{equation*}
and 
\begin{align*}
\frac{k}{s_{k}}+\frac{m-k}{q} & =\frac{kp+p-2k}{2p}+\frac{\left( m-k\right)
\left( p-2\right) }{2p} \\
& =\frac{kp+p-2k}{2p}+\frac{mp-kp-2m+2k}{2p} \\
& =\frac{mp+p-2m}{2p}.
\end{align*}
and thus%
\begin{equation*}
\frac{m}{\rho}=\frac{k}{s_{k}}+\frac{m-k}{q}
\end{equation*}
and we can use Lemma \ref{blei.interp}.

Let $L:\ell_{p}^{n}\times\dots\times\ell_{p}^{n}\rightarrow\mathbb{C}$ be
the unique symmetric $m$-linear map associated to $P$. Note that%
\begin{equation*}
L\left( \mathbf{z}^{(1)},...,\mathbf{z}^{(m)}\right) =\sum_{\mathbf{i\in}%
\mathcal{M}(m,n)}\frac{c_{[\mathbf{i}]}}{\left\vert \mathbf{i}\right\vert }%
z_{i_{1}}^{(1)}...z_{i_{m}}^{(m)}.
\end{equation*}
Thus%
\begin{align*}
\sum_{|\alpha|=m}\left\vert a_{\alpha}\right\vert ^{\frac{2mp}{mp+p-2m}} &
=\sum_{\mathbf{i\in}\mathcal{J}(m,n)}\left\vert c_{\mathbf{i}}\right\vert ^{%
\frac{2mp}{mp+p-2m}} \\
& =\sum_{\mathbf{i\in}\mathcal{M}(m,n)}\left\vert \mathbf{i}\right\vert ^{%
\frac{-p}{mp+p-2m}}\left( \frac{\left\vert c_{[\mathbf{i}]}\right\vert }{%
\left\vert \mathbf{i}\right\vert ^{\frac{1}{q}}}\right) ^{\frac {2mp}{mp+p-2m%
}} \\
& \leq\sum_{\mathbf{i\in}\mathcal{M}(m,n)}\left( \frac{\left\vert c_{\left[ 
\mathbf{i}\right] }\right\vert }{\left\vert \mathbf{i}\right\vert ^{\frac {1%
}{q}}}\right) ^{\frac{2mp}{mp+p-2m}}
\end{align*}
Using Lemma \ref{blei.interp} with $s_{k}=\frac{2kp}{kp+p-2k}$ and $q=\frac{%
2p}{p-2}$, we get 
\begin{align*}
& \left( \sum_{|\alpha|=m}\left\vert a_{\alpha}\right\vert ^{\frac {2mp}{%
mp+p-2m}}\right) ^{\frac{mp+p-2m}{2mp}} \\
& \leq\left[ \prod_{S\in\mathcal{P}_{k}}\left( \sum_{\mathbf{i}_{S}\in%
\mathcal{M}(k,n)}\left( \sum_{\mathbf{i}_{\hat{S}}\in\mathcal{M}%
(m-k,n)}\left( \frac{\left\vert c_{\left[ \mathbf{i}\right] }\right\vert }{%
\left\vert \mathbf{i}\right\vert ^{\frac{1}{q}}}\right) ^{q}\right) ^{\frac{%
s_{k}}{q}}\right) ^{\frac{1}{s_{k}}}\right] ^{\frac{1}{\binom{m}{k}}}.
\end{align*}

Note that $|\mathbf{i}|\leq |\mathbf{i}_{\hat{S}}|\frac{m!}{(m-k)!}$, and
thus 
\begin{align*}
& \left( \sum_{|\alpha |=m}\left\vert a_{\alpha }\right\vert ^{\frac{2mp}{%
mp+p-2m}}\right) ^{\frac{mp+p-2m}{2mp}} \\
& \leq \left( \frac{m!}{(m-k)!}\right) ^{\frac{q-1}{q}}\left[ \prod_{S\in 
\mathcal{P}_{k}}\left( \sum_{\mathbf{i}_{S}\in \mathcal{M}(k,n)}\left( \sum_{%
\mathbf{i}_{\hat{S}}\in \mathcal{M}(m-k,n)}\frac{\left\vert c_{\left[ 
\mathbf{i}\right] }\right\vert ^{q}}{\left\vert \mathbf{i}\right\vert }%
\left( \frac{\left\vert \mathbf{i}_{\hat{S}}\right\vert }{\left\vert \mathbf{%
i}\right\vert }\right) ^{q-1}\right) ^{\frac{s_{k}}{q}}\right) ^{\frac{1}{%
s_{k}}}\right] ^{\frac{1}{\binom{m}{k}}} \\
& =\left( \frac{m!}{(m-k)!}\right) ^{\frac{q-1}{q}}\left[ \prod_{S\in 
\mathcal{P}_{k}}\left( \sum_{\mathbf{i}_{S}\in \mathcal{M}(k,n)}\left( \sum_{%
\mathbf{i}_{\hat{S}}\in \mathcal{M}(m-k,n)}\frac{\left\vert c_{\left[ 
\mathbf{i}\right] }\right\vert ^{q}}{\left\vert \mathbf{i}\right\vert ^{q}}%
\left\vert \mathbf{i}_{\hat{S}}\right\vert ^{q-1}\right) ^{\frac{s_{k}}{q}%
}\right) ^{\frac{1}{s_{k}}}\right] ^{\frac{1}{\binom{m}{k}}}.
\end{align*}%
Let us fix $S\in \mathcal{P}_{k}(m)$. There is no loss of generality in
supposing $S=\left\{ 1,...,k\right\} $. We then fix some $\mathbf{i}_{S}\in 
\mathcal{M}(k,n)$ and we introduce the following $\left( m-k\right) $%
-homogeneous polynomial on $\ell _{p}^{n}$:%
\begin{equation*}
P_{\mathbf{i}_{S}}(\mathbf{z})=L\left( e_{i_{1}},...,e_{i_{k}},\mathbf{z}%
,...,\mathbf{z}\right) .
\end{equation*}%
Observe that 
\begin{align*}
P_{\mathbf{i}_{S}}(\mathbf{z})& =\sum_{\mathbf{i}_{\hat{S}}\in \mathcal{M}%
(m-k,n)}\frac{c_{\left[ \mathbf{i}\right] }}{\left\vert \mathbf{i}%
\right\vert }z_{\mathbf{i}_{\hat{S}}} \\
& =\sum_{\mathbf{i}_{\hat{S}}\in \mathcal{J}(m-k,n)}\frac{c_{\left[ \mathbf{i%
}\right] }}{\left\vert \mathbf{i}\right\vert }\left\vert \mathbf{i}_{\hat{S}%
}\right\vert z_{\mathbf{i}_{\hat{S}}}
\end{align*}%
so that 
\begin{equation*}
\left\Vert P_{\mathbf{i}_{S}}(\mathbf{z})\right\Vert _{q}=\left( \sum_{%
\mathbf{i}_{\hat{S}}\in \mathcal{J}(m-k,n)}\frac{\left\vert c_{\left[ 
\mathbf{i}\right] }\right\vert ^{q}}{\left\vert \mathbf{i}\right\vert ^{q}}%
\left\vert \mathbf{i}_{\hat{S}}\right\vert ^{q}\right) ^{\frac{1}{q}}=\left(
\sum_{\mathbf{i}_{\hat{S}}\in \mathcal{M}(m-k,n)}\frac{\left\vert c_{\left[ 
\mathbf{i}\right] }\right\vert ^{q}}{\left\vert \mathbf{i}\right\vert ^{q}}%
\left\vert \mathbf{i}_{\hat{S}}\right\vert ^{q-1}\right) ^{\frac{1}{q}}.
\end{equation*}%
By the definition of $p_{0}$ we have%
\begin{equation*}
\left\Vert P_{\mathbf{i}_{S}}(\mathbf{z})\right\Vert _{q}^{s_{k}}\leq
K_{s_{k},p}^{(m-k)s_{k}}\int_{S_{\ell _{p}^{n}}}\left\vert L\left(
e_{i_{1}},...,e_{i_{k}},\mathbf{z},...,\mathbf{z}\right) \right\vert
^{s_{k}}d\mu ^{n}(\mathbf{z}).
\end{equation*}%
Thus, 
\begin{align*}
& \sum_{\mathbf{i}_{S}}\left( \sum_{\mathbf{i}_{\hat{S}}}\frac{\left\vert c_{%
\left[ \mathbf{i}\right] }\right\vert ^{q}}{\left\vert \mathbf{i}\right\vert
^{q}}\left\vert \mathbf{i}_{\hat{S}}\right\vert ^{q-1}\right) ^{\frac{1}{q}%
\times s_{k}} \\
& \leq K_{s_{k},p}^{(m-k)s_{k}}\int_{S_{\ell _{p}^{n}}}\sum_{\mathbf{i}%
_{S}}\left\vert L\left( e_{i_{1}},...,e_{i_{k}},\mathbf{z},...,\mathbf{z}%
\right) \right\vert ^{s_{k}}d\mu ^{n}(\mathbf{z}).
\end{align*}%
Now fixing $\mathbf{z}\in S_{\ell _{p}^{n}}$ we apply the multilinear
Hardy-Littlewood inequality to the $k-$linear form%
\begin{equation*}
\left( \mathbf{z}^{(1)},...,\mathbf{z}^{(k)}\right) \mapsto L\left( \mathbf{z%
}^{(1)},...,\mathbf{z}^{(k)},\mathbf{z},...,\mathbf{z}\right)
\end{equation*}%
and we obtain, from \cite[Theorem 1.1]{apd} and Lemma \ref{haharr}, 
\begin{align*}
& \sum_{\mathbf{i}_{S}}\left\vert L\left( e_{i_{1}},...,e_{i_{k}},\mathbf{z}%
,...,\mathbf{z}\right) \right\vert ^{s_{k}} \\
& \leq \left( \left( \frac{2}{\sqrt{\pi }}\right) ^{\frac{2k\left(
k-1\right) }{p}}\cdot \left( B_{\mathbb{C},k}^{\mathrm{mult}}\right) ^{\frac{%
p-2k}{p}}\cdot \sup_{\mathbf{z}^{(1)},...,\mathbf{z}^{(k)}\in S_{\ell
_{p}^{n}}}\left\vert L\left( \mathbf{z}^{(1)},...,\mathbf{z}^{(k)},\mathbf{z}%
,...,\mathbf{z}\right) \right\vert \right) ^{s_{k}} \\
& \leq \left( \left( \frac{2}{\sqrt{\pi }}\right) ^{\frac{2k\left(
k-1\right) }{p}}\cdot \left( B_{\mathbb{C},k}^{\mathrm{mult}}\right) ^{\frac{%
p-2k}{p}}\cdot \frac{(m-k)!\cdot m^{m}}{(m-k)^{m-k}\cdot m!}\left\Vert
P\right\Vert \right) ^{s_{k}}\text{.}
\end{align*}%
Thus 
\begin{align*}
& \left( \sum_{|\alpha |=m}\left\vert a_{\alpha }\right\vert ^{\frac{2mp}{%
mp+p-2m}}\right) ^{\frac{mp+p-2m}{2mp}} \\
& \leq \left( \frac{m!}{(m-k)!}\right) ^{\frac{q-1}{q}}\cdot
K_{s_{k},p}^{m-k}\cdot \frac{(m-k)!\cdot m^{m}}{(m-k)^{m-k}\cdot m!}\cdot
\left( \frac{2}{\sqrt{\pi }}\right) ^{\frac{2k\left( k-1\right) }{p}}\cdot
\left( B_{\mathbb{C},k}^{\mathrm{mult}}\right) ^{\frac{p-2k}{p}}\Vert P\Vert
.
\end{align*}
\end{proof}

\ 

\end{document}